\numberwithin{equation}{section}
\theoremstyle{plain}
\newtheorem{thm}{Theorem}[section]
\newtheorem{lem}[thm]{Lemma}
\newtheorem{prop}[thm]{Proposition}
\newcommand{\thmref}[1]{Theorem~\ref{#1}}
\newcommand{\lemref}[1]{Lemma~\ref{#1}}
\newcommand{\propref}[1]{Proposition~\ref{#1}}
\theoremstyle{definition}
\newtheorem{rmk}[thm]{Remark}
\newtheorem*{hypo*}{Question}
\newtheorem*{hypo2*}{Question'}
\newcommand{\hyporef}[1]{Hypothesis~I}
\newcommand{\mbb}{\mathbb}
\newcommand{\x}{\textbf}
\newcommand{\mf}{\mathbf}
\newcommand{\q}{\quad}
\newcommand{\qq}{\qquad}
\newcommand{\mrm}{\mathrm}
\newcommand{\s}{SL(2, \mbb Z)}
\newcommand{\sr}{SL(2, \mbb R)}
\newcommand{\mr}{\widetilde{SL}(2, \mbb R)}
\newcommand{\g}{\widetilde{\gamma}}
\begin{document}

\title[Jacobi forms and differential operators]{Jacobi Forms and Differential Operators: odd weights}

\author{Soumya Das} 
\address{Department of Mathematics\\
Indian Institute of Science\\
560012, Bangalore, India.}
\email{somu@math.iisc.ernet.in, soumya.u2k@gmail.com}

\author{Ritwik Pal}
\address{Department of Mathematics\\
	Indian Institute of Science\\
560012, Bangalore, India.}
\email{ritwik.1729@gmail.com, ritwik14@math.iisc.ernet.in}

\date{}
\subjclass[2010]{Primary 11F50; Secondary 11F25, 11F11} 
\keywords{Jacobi forms, Differential Operators, Wronskian of theta derivatives, Vector valued modular forms}

\begin{abstract}
We show that it is possible to remove two differential operators from the standard collection of $m$ of them used to embed the space of Jacobi forms of \textit{odd} weight $k$ and index $m$ into several pieces of elliptic modular forms. This complements the previous work of one of the authors in the case of even weights.
\end{abstract}
\maketitle 

\section{Introduction} 
Let $J_{k,m}(N)$ denote the space of Jacobi forms of weight $k$ and index $m$ for the Jacobi group $\Gamma_0(N) \ltimes \mathbb{Z}^2$ of level $N$. It is well known that certain differential operators $D_{\nu}$ ($0 \leq \nu \leq m$ for $k$ even and $1 \leq \nu \leq m-1$ for $k$ odd), first systematically studied in the monograph by Eichler-Zagier (see \cite{ez}), map $J_{k,m}(N)$ injectively into a direct sum of finitely many spaces of elliptic modular forms. See section~\ref{prelim} for a description of these objects. This result has many applications, estimating the dimension of $J_{k,m}(N)$ precisely is one of them. Moreover information about the vanishing of the kernel of $D_0$ when $k=2, m=1$ and $N$ is square--free, has a bearing on the Hashimoto's conjecture on theta series, see \cite{ab1, ab2}.

When the index $m=1$, the question about $\ker D_0$, which is nothing but the restriction map from $J_{k,m}(N)$ to $M_k(N)$, the space of elliptic modular forms of weight $k$ on $\Gamma_0(N)$; defined by $\phi(\tau,z) \mapsto \phi(\tau,0)$, translates into the possibility of removing the differential operator $D_2$ while preserving injectivity. This question is also interesting in its own right, and has received some attention in the recent past, see the works \cite{ab1, ab2, das, rs}, and the introduction there. We only note here that first results along this line of investigation seems to be by J. Kramer, who gave an explicit description of $\ker D_0$ when $m = 1$ and level $N$ a prime, in terms of the vanishing order of cusp forms in a certain subspace of $S_4(N)$ (This is related to the so-called Weierstrass subspaces of $S_k(N)$, see \cite{ab2}).

An unpublished question of S. B\"ocherer, inspired by the case $m=1$ as discussed above, asks for information about this phenomenon when the index in bigger than $1$. Let us recall that below.
\begin{hypo*} \label{hy}
For $0 \leq \nu \leq m$ and $m\geq 1$, the map $i_{\nu}(k,m,N)$ defined by
\begin{align*}  
D_0 \oplus \ldots \widehat{D_{ \nu}} \ldots \oplus D_{2m} \colon  & J_{k,m}(N) \\
& \xrightarrow{i_{\nu}(k,m,N)} M_{k}(N) \oplus \ldots 
\widehat{M_{k+ \nu}(N)} \ldots \oplus M_{k + 2m}(N),
\end{align*}
is \textbf{injective}; where the $\ \widehat{} \ $ signifies that the corresponding term has to be omitted.
\end{hypo*}
Note that it is classical (see also section~\ref{diffsec}) that the above map without any term omitted is an injection, so the above question asks for something stronger. This was answered relatively satisfactorily in \cite{das} when $k$ was even, namely that one can, under certain conditions on $m,k$, remove the last operator $D_{2m}$. 

\textbf{In this paper, we take up the case $k$ odd and henceforth assume this condition.} As expected, there are some subtleties in this case. First of all, it is clear that only differential operators with odd indices matter, and moreover by \cite[p.~37]{ez} it is known that the last operator $D_{2m-1}$ can be removed, since an odd Jacobi form $\phi(\tau,z)$ cannot have a more than $(2m-3)$-fold zero at $z=0$. This shows that the above \textbf{Question} in our context can be phrased as:
\begin{hypo2*} \label{hy2}
For $1 \leq \nu \leq m-1$, $m\geq 3$ with $k$ odd and keeping the above notation, the map
\begin{align*}  
 J_{k,m}(N)  \xrightarrow{ i_{2\nu-1}(k,m,N) } M_{k+1}(N) \oplus \ldots 
\widehat{M_{k+ 2\nu-1}(N) } \ldots \oplus M_{k + 2m-3}(N),
\end{align*}
is \textbf{injective}.
\end{hypo2*}

Note that the right hand side of the map is empty when $m \leq 2$. The aim of this paper is to answer the above \textbf{Question'}. The method is in spirit the same as in \cite{das}, and can be thought of as a sequel to loc. cit., even though one has to be careful about certain subtleties, as the weight is odd. For example, we encounter Wronskians of (congruent) theta functions of weight $3/2$, which seems not to be written down in the literature. Let us state the main theorem of this paper now, which essentially states that we can remove the last operator $D_{2m-3}$. 

\begin{thm}\label{mainthm} Let $k\geq 3$ be an odd integer. Then  
\begin{enumerate}
\item[{(i)}]
$i_{2m-3}(k,m,N)$ is injective for all $N \geq 1$ with $m-k \geq 4$. 

\item[{(ii)}]
$i_{2m-3}(k,m,N)$ is injective for $N$ square-free and $m$ odd with $m-k \geq 2$.

\item[{(iii)}]
$i_{2m-3}(k,m,N)$ is injective for $N=1$ and $m$ odd with $m-k \geq 2$.
\end{enumerate}
\end{thm}

We take this opportunity to note that the proof of \cite[Theorem~1.2~(i)]{das} is not correct as it is; however we stress that this does not affect any other result of the paper, moreover moreover this part of the result was already known before from \cite{rs}.

\textbf{Acknowledgment.}
The first author acknowledges financial support in parts from the UGC Centre for Advanced Studies, DST (India) and IISc, Bangalore during the completion of this work. The second named author thanks NBHM for financial support and IISc, Bangalore where this research was done.

\section{Notation and preliminaries}\label{prelim}
$\Gamma$ will always denote the group $\s$. $M_{k}(\Gamma,\chi)$ (respectively $S_{k}(\Gamma,\chi)$) denotes the space of modular forms (resp. cusp forms) of weight $k$ on $\Gamma$ with character $\chi$. More generally, the space of modular forms of weight $k$ on $\Gamma_{0}(N)$ with a multiplier system $\vartheta$ is denoted by $M_k(N,\vartheta)$. We refer the reader to \cite{rankin} for more details.
\subsection{Jacobi forms}
We have to recall several basic results and notations from \cite{das}. Let $N$, $m$ and $k$ be positive integers. We denote the space of Jacobi forms 
of weight $k$, index $m$, level $N$ by $J_{k,m}(N)$. If $\phi$ is a Jacobi form in $J_{k,m}(N)$, then it has the Fourier expansion $\phi(\tau,z) = \sum_{ n,r\in {\mathbf Z} \colon 4mn \geq r^2  }  c_\phi(n,r) q^n \zeta^r$ and a theta expansion
\begin{equation}\label{thetad}
\phi(\tau,z) = \sum_{\mu \bmod{2m}} h_{m,\mu}(\tau)\theta_{m,\mu}(\tau,z),
\end{equation}
with $\theta_{m,\mu}(\tau,z) =  \sum_{r \in \mathbf{Z}, \, {r\equiv \mu\bmod{2m}}} q^{\frac{r^2}{4m}} \zeta^r$ are the congruent theta series of weight $1/2$ and index $m$; and $h_{m,\mu}(\tau) = \sum_{n \in \mathbf{Z}, \,{n\ge {\mu}^2/4m}}
c_{\phi}(n,\mu)  q^{ (n-\frac {\mu^2}{4m}) }$. We use the standard notation $q := e^{2 \pi i \tau}$ ($\tau \in \mathbb H$) and $\zeta := e^{2 \pi i z}$ ($z \in \mathbb C$).

\subsubsection{Metaplectic Jacobi group}
%

The metaplectic Jacobi group over $\mbb R$ denoted by $\widetilde{J}_{\kappa,m}$ to be $\mr \ltimes \mbb R^2 \cdot \mbb S^1$, where $\mr $ is the metaplectic double cover of $\sr$, $\mbb S^1$ is the circle group. Further, one can defines a certain action of $\widetilde{J}_{\kappa,m}$ on the space of holomorphic functions $\mbb H \times \mathbb{C}$ for $\kappa \in \tfrac{1}{2}  \mathbb{Z}$ and $m\geq 1$:
\[ \phi \mid_{\kappa,m} \widetilde{\xi} := s^m w(\tau)^{-2\kappa}e^{2\pi im (-\frac{c(z+\lambda\tau+\mu)^2}{c\tau +d} +\lambda^2\tau + 2\lambda z + \lambda\mu)}  \phi\left(\frac{a\tau+b}{c\tau+d}, \frac{z+\lambda\tau+\mu}{c\tau+d}\right)\]
where $\widetilde{\xi} = (\g, [\lambda,\mu]s)$, with $\lambda,\mu \in \mbb R$, $s \in \mbb S^1 $. For brevity, we denote the action of the naturally embedded subgroup $\mr$ viz. $\phi \mid_{\kappa,m} (\g, [0,0]1)$ by $\phi \mid_{\kappa,m} \g$. We refer the reader to \cite{sko} for further details.

\subsubsection{Vector valued modular forms} \label{vvmf-sec}
An $n$-tuple $\mf f:= (f_1,\ldots,f_n)^t$ of holomorphic functions on $\mbb H$ is called a vector valued modular form (v.v.m.f.) of weight $\kappa \in \tfrac{1}{2}\mbb Z$ with respect to a representation $\rho \colon \widetilde{\Gamma} \to GL(n,\mbb C)$ if
\[ \mf f \mid_\kappa \g(\tau)= \rho(\g) \mf f(\tau), \q (\tau \in \mbb H, \g \in \widetilde{\Gamma}), \]
and $\mf f$ remains bounded as $\Im(\tau) \to \infty$, where $\mid_\kappa$ operation is defined component-wise on $\mf f$. Let us denote the space of such function by $\widetilde{M}(k,\rho)$.
\subsection{Differential operators} \label{diffsec}
We recall the differential operators $D_\nu$ ($\nu \geq 0$) on $J_{k,m}(N)$ following the exposition in \cite{das}. Namely from the Taylor expansion $\phi(\tau,z) =  \sum_{\nu \geq 0} \chi_{\nu}(\tau) \, z^{\nu}$, of $\phi \in J_{k,m}(N)$ around $z = 0$, one defines 
$D_{ \nu} \phi(\tau) := A_{k,\nu} \xi_{\nu} = A_{k,\nu}~\sum_{0 \leq \mu \leq \frac{\nu}{2} } \, \tfrac{ (- 2 \pi i m)^{\mu} \, (k + \nu - \mu - 2)!   } {(k+2 \nu -2)! \, \mu !}\, \chi_{\nu - 2 \mu}^{(\mu)} (\tau)$, where $A_{k, \nu}$ are certain explicit constants, see e.g., \cite{das}. Then it is known that $ \xi_{\nu} $ is a modular form of weight $k + \nu$ for $\Gamma_0(N)$, and if $ \nu >0$, it is a cusp form.

Furthermore, for all $m,N \geq 1$, the linear map defined by $\oplus_{\nu=0}^m D_{2\nu}$ from $J_{k,m}(N)$ to $M_{k}(N) \oplus_{\nu=1}^m S_{k + 2 \nu}(N)$ is injective.
\section{Setup} \label{setup}
Let $\phi(\tau,z)\in J_{k,m}(N)$ and recall that $k$ is odd. This implies   
$\phi(\tau,z)=-\phi(\tau,-z)$ and hence from (2.4) we can write
\[ \phi(\tau,z)=\underset{\nu \geq 1} \sum\chi_{2\nu-1} z^{2\nu-1}. \]
The theta components $h_{\mu}$ and the theta series $\theta_{m,\mu}$ satisfy, (see \cite{ez}) 
\[ h_{2m-\mu}=- h_{\mu}; \q
\theta_ {m,-\mu}(\tau,z)=\theta_{m,\mu}(\tau,-z) \qquad (\mu \bmod 2m). \]
In particular, $h_{0}=h_{m}=0$. So we get $\phi(\tau,z)$ equals to
\[ \sum_{\mu=1}^{m-1} h_{\mu}(\tau)(\theta_{m,\mu}(\tau,z)-\theta_{m,\mu}(\tau,-z))=\sum_{\mu=1}^{m-1}h_{\mu}(\tau)   \big(\sum_{r\equiv\mu(\bmod{2m})} q^\frac{r^2}{4m}(\zeta^r-\zeta^{-r})  \big).\]
 This implies 
\begin{equation*}
\begin{split}
\chi_{2\nu-1}(\tau)=&\frac{1}{(2\nu-1)!} (\partial_z )^{2\nu-1}\phi(\tau,z)\mid_{z=0}\\
=&\sum_{\mu=1}^{m-1}h_{\mu}(\tau) \sum_{r\equiv\mu(\bmod 2m)} 2 q^\frac{r^2}{4m} (2\pi ir)^{2\nu-1} 
=c_{\nu,m}\sum_{\mu=1}^{m-1}h_{\mu}(\tau)\theta^{{*}{(\nu-1)}}_{m,\mu}(\tau)\text{,}
\end{split}
\end{equation*}
where $c_{\nu,m} = 2\frac{(2\pi i)^{2\nu-1}(4m)^{\nu-1}}{(2\nu-1)!}$; we have put $\theta^{*}_{m,\mu}(\tau)= \displaystyle\sum_{r\equiv\mu(\bmod 2m)}rq^\frac{r^2}{4m} $ and used the notation $\theta^{{*}{(\nu-1)}}_{m,\mu}(\tau)= (\partial_\tau)^{\nu-1}\theta^{*}_{m,\mu}(\tau)$.
\begin{rmk}
It is well-known that the (congruent) theta series $\theta_{m,\mu}^{*}$ ($\mu \bmod 2m$) is a cusp form of weight $\frac{3}{2}$ on $\Gamma(4m)$; see \cite[Prop~2.1]{shimu} for example.
\end{rmk}
Let us put $\chi_{2\nu-1}^{*} = \frac {\chi_{2\nu-1}}{c_{\nu,m}}$. From the above, we get a system of equations connecting $(h_{\mu})_\mu$ and $ (\chi_{2\nu-1}^{*})_\nu$:
 \begin{equation}\label{mat}
\begin{array}{ccccc}
\left[\begin{array}{cccc}
\theta^*_{m,1} & \theta^*_{m,2} & \dots & \theta^*_{m,m-1}\\
\vdots & \vdots & \vdots & \vdots\\
\theta^{*(v-1)}_{m,1} & \theta^{*(v-1)}_{m,2} & \dots & \theta^{*(v-1)}_{m,m-1}\\
\vdots & \vdots & \vdots & \vdots\\
\theta^{*(m-2)}_{m,1} & \theta^{*(m-2)}_{m,2} & \dots & \theta^{*(m-2)}_{m,m-1}\\
\end{array}\right] & . &
\left[\begin{array}{c}
h_1 \\ h_2 \\ \vdots \\ h_\nu \\ \vdots \\ h_{m-1}
\end{array}\right]
& = &
\left[\begin{array}{c}
\chi_{1}^{*} \\\chi_{3}^{*} \\ \vdots \\ \chi_{2\nu-1}^{*} \\ \vdots \\ \chi_{2m-3}^{*}
\end{array}\right]
\end{array}.
\end{equation} 
\begin{lem} \label{F}
$F:=(\theta^{*}_{m,1},...,\theta^{*}_{m,m-1}) \in \widetilde{M}(\frac{3}{2},\rho_{m}) $ for some representation $ \rho_{m}$.
\end{lem}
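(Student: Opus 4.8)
The plan is to realize each $\theta^{*}_{m,\mu}$ as the normalized first $z$-derivative at the origin of the congruent theta function $\theta_{m,\mu}(\tau,z)$, and to transport the vector valued Jacobi transformation law of the latter through this derivative. Concretely, set $L:=\tfrac{1}{2\pi i}\partial_z|_{z=0}$; since $\partial_z\theta_{m,\mu}(\tau,z)=\sum_{r\equiv\mu}2\pi i\,r\,q^{r^2/4m}\zeta^r$, we have $L\theta_{m,\mu}=\theta^{*}_{m,\mu}$. The starting point is the classical fact (see \cite{ez,sko}) that the full vector $\Theta:=(\theta_{m,\mu})_{\mu\bmod 2m}$ is a vector valued Jacobi form of weight $\tfrac12$ and index $m$ for the metaplectic Jacobi group, i.e. $\Theta\mid_{1/2,m}\g=\rho_W(\g)\,\Theta$ for all $\g\in\m$, where $\rho_W$ is the Weil representation on $\mbb C^{2m}$ generated by the standard formulas for $T$ and $S$.

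The key computation is to apply $L$ to the slash action. For $\g$ lying over $\gamma=\left(\begin{smallmatrix} a&b\\ c&d\end{smallmatrix}\right)$ one has, from the definition in section~\ref{prelim} with $\lambda=\mu=0$, $s=1$ and $\ka=\tfrac12$,
\[ (\theta_{m,\mu}\mid_{1/2,m}\g)(\tau,z)=w(\tau)^{-1}\,e^{-2\pi i m c z^2/(c\tau+d)}\,\theta_{m,\mu}\!\Big(\gamma\tau,\tfrac{z}{c\tau+d}\Big). \]
Differentiating in $z$ and setting $z=0$, the derivative of the index exponential $e^{-2\pi i m c z^2/(c\tau+d)}$ vanishes at $z=0$, so only the chain rule factor $(c\tau+d)^{-1}$ survives; using $w(\tau)^2=c\tau+d$ this gives $L(\theta_{m,\mu}\mid_{1/2,m}\g)=w(\tau)^{-3}\theta^{*}_{m,\mu}(\gamma\tau)=(\theta^{*}_{m,\mu}\mid_{3/2}\g)(\tau)$. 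On the other hand $L$ is linear and commutes with the constant matrix $\rho_W(\g)$, so applying it to $\Theta\mid_{1/2,m}\g=\rho_W(\g)\Theta$ yields $\theta^{*}_{m,\mu}\mid_{3/2}\g=\sum_\nu\rho_W(\g)_{\mu\nu}\,\theta^{*}_{m,\nu}$. Hence the full vector $(\theta^{*}_{m,\mu})_{\mu\bmod 2m}$ is a v.v.m.f. of weight $\tfrac32$ for $\rho_W$.

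It remains to pass from the $2m$ components to the $m-1$ components appearing in $F$. Replacing $r$ by $-r$ in the defining series shows $\theta^{*}_{m,0}=\theta^{*}_{m,m}=0$ and $\theta^{*}_{m,2m-\mu}=-\theta^{*}_{m,\mu}$, so all components are determined by $\theta^{*}_{m,1},\dots,\theta^{*}_{m,m-1}$. Equivalently, $(\theta^{*}_{m,\mu})_\mu$ lies in the $(-1)$-eigenspace of the involution $\iota\colon e_\mu\mapsto e_{-\mu}$ of $\mbb C^{2m}$; a direct check on the generators $T,S$ shows $\iota$ commutes with $\rho_W$, so this eigenspace is $\rho_W$-stable of dimension $m-1$. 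Letting $\rho_m$ be the resulting subrepresentation, written in the basis indexed by $\mu=1,\dots,m-1$, gives exactly $F\mid_{3/2}\g=\rho_m(\g)\,F$. Finally, holomorphy of each $\theta^{*}_{m,\mu}$ on $\mbb H$ is clear, and since every exponent $r^2/4m$ with $1\le\mu\le m-1$ is strictly positive, $F$ decays, and in particular stays bounded, as $\Im(\tau)\to\infty$; thus $F\in\widetilde{M}(\tfrac32,\rho_m)$.

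The step I expect to require the most care is the half-integral weight bookkeeping through the metaplectic cover: one must fix the square root $w(\tau)$ consistently inside $\g$ so that $w(\tau)^2=c\tau+d$ and the combination $w(\tau)^{-1}(c\tau+d)^{-1}$ is correctly read off as $w(\tau)^{-3}$, the weight-$\tfrac32$ automorphy factor. The genuine analytic input is the single clean observation that the index-$m$ exponential contributes nothing after one $z$-derivative at $z=0$; everything else is linear algebra for the Weil representation.
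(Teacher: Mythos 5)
Your proof is correct and takes essentially the same route as the paper: both transport the vector-valued transformation law of the weight-$\tfrac12$ theta functions through one $z$-derivative at $z=0$, noting that the index exponential $e^{-2\pi i m c z^{2}/(c\tau+d)}$ dies after one derivative at the origin so that only the chain-rule factor survives and $w(\tau)^{-3}$ appears as the weight-$\tfrac32$ automorphy factor, and then reduce from the $2m$ components to $\mu=1,\dots,m-1$ via $\theta^{*}_{m,0}=\theta^{*}_{m,m}=0$ and $\theta^{*}_{m,-\mu}=-\theta^{*}_{m,\mu}$. The only cosmetic difference is in the reduction step: the paper antisymmetrizes first (working with $\widetilde{\theta}_{m,\mu}(\tau,z)=\theta_{m,\mu}(\tau,z)-\theta_{m,\mu}(\tau,-z)$, folding the coefficients as $c_{v,\mu}-c_{2m-v,\mu}$, and citing \cite[Prop 3.1]{das} for the representation property), whereas you package the same fold as the restriction of the Weil representation to the $(-1)$-eigenspace of the involution $\iota\colon e_{\mu}\mapsto e_{-\mu}$, which yields the representation $\rho_m$ directly.
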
 
\begin{proof}
We know that the tuple  $ (\theta_{m,1}(\tau,z),\theta_{m,2}(\tau,z),...,\theta_{m,2m}(\tau,z))$ is a v.v.m.f. with respect to a representation $\widetilde{\epsilon}$ of $\widetilde{\Gamma}$. Thus there exists $D_{m}(\widetilde{\epsilon}) \in GL_{n} (\mathbb{C})$
such that  we get (see \cite{sko})
\begin{equation*}
(\theta_{m,1}\mid_{\frac{1}{2}}\widetilde{\epsilon}, \theta_{m,2}\mid_{\frac{1}{2}}\widetilde{\epsilon}, \dots , \theta_{m,2m}\mid_{\frac{1}{2}}\widetilde{\epsilon})^t
=D_m(\widetilde{\epsilon})\hspace{0.2cm}
(\theta_{m,1} , \theta_{m,2} , \dots , \theta_{m,2m})^t,
\end{equation*}
where $A^{t}$ is the transpose of $A$. Hence for $\mu, v \bmod{2m}$, there exist $c_{v,\mu} \in \mathbb{C}$ such that
\begin{equation}\label{slt}
 \theta_{m,\mu}(\tau,z)\mid_{\frac{1}{2}}\widetilde{\epsilon}= \sum_ {v\;mod\; 2m}c_{v,\mu} \theta_{m,v}(\tau,z).
\end{equation}
Replacing $z$ by $-z$ in \eqref{slt} and then subtracting from \eqref{slt} we get
\begin{equation}\label{thtil}
\widetilde{\theta}_{m,\mu}(\tau,z)\mid_{\frac{1}{2}}\widetilde{\epsilon}= \sum_ {v\; mod\; 2m}c_{v,\mu}\widetilde{ \theta}_{m,v}(\tau,z),
\end{equation}
where we have put $\widetilde{\theta}_{m,\mu}(\tau,z)=\theta_{m,\mu}(\tau,z)-\theta_{m,\mu}(\tau,-z).$

Let $\widetilde{\epsilon} = (\left( \begin{smallmatrix} a & b \\c & d \end{smallmatrix} \right), w(\tau))\in \widetilde{\Gamma}. $ 
Now recall that 
\[\widetilde\theta_{m,\mu}(\tau,z)\mid_{\frac{1}{2}} \widetilde{\epsilon}= w^{-1}(\tau) e( \frac{-mcz^2}{c\tau+d})\;\widetilde\theta_{m,\mu}(\frac{a\tau+b}{c\tau+d},\frac{z}{c\tau+d} ),\] 
so that by differentiation,
\[ \frac{\partial}{\partial z}(\widetilde\theta_{m,\mu}(\tau,z))\mid_{\frac{1}{2}} \widetilde{\epsilon})\mid_{z=0}=w^{-3}(\tau) \ \theta_{m,\mu}^{*}(\frac{a\tau+b}{c\tau+d})=\theta_{m,\mu}^{*}(\tau)\mid_{\frac{3}{2}} \widetilde{\epsilon}.\]
Since $\frac{\partial}{\partial z} (\widetilde{\theta}_{m,\mu}(\tau,z))\mid_{z=0}=\theta_{m,\mu}^{*}$, by using \eqref{thtil} we get 
\[\theta_{m,\mu}^{*}(\tau)\mid_{\frac{3}{2}} \widetilde{\epsilon}=\displaystyle\sum_{v\; mod\; 2m} c_{v,\mu}\ \theta_{m,v}^{*}(\tau).\]
Note that  $\theta_{m,0}^{*}(\tau)= \theta_{m,m}^{*}(\tau)=0$ and  $\theta_{m,\mu}^{*}(\tau)= -\theta_{m,-\mu}^{*}(\tau)$.
Hence
\[\theta_{m,\mu}^{*}(\tau)\mid_{\frac{3}{2}} \widetilde{\epsilon}= \displaystyle\sum_{v=1}^{m-1}(c_{v,\mu}-c_{2m-v,\mu})\theta_{m,v}^{*}(\tau)\quad\text{ for all }\mu \in \{1,2,...,m-1\}.\]
The lemma then follows from \cite[Prop 3.1]{das}.
\end{proof}                    
Let $\mathcal{W}$ denote the matrix of theta-derivatives appearing in equation \eqref{mat}. It is crucial for us to find a formula for $\det\mathcal{W}$ (i.e. the Wronskian of $(\theta_{m,1}^{*},...,\theta_{m,m-1}^{*})$). 

At this point, following \cite{das}, we introduce the differential operator $\mathbb{D}_\kappa$, sometimes referred to as `modular derivative'. Namely for $f$ holomorphic on $\mathbb{H}$ and $ E_{2} = 1-24\sum_{n=1}^{\infty}\sigma_{1}(n)q^{n} $, the weight 2 quasimodular Eisentein series, we define for $\kappa \in \frac{1}{2} \mathbb{Z}$ the operator
 \[\mathbb{D}_{\kappa}  f := q\frac{d}{dq}-\frac{\kappa}{12} E_{2}f.\]
 The operator $\mathbb{D}_{\kappa}$ enjoys the following equivariance property under $\widetilde{\Gamma}$:
 \[\mathbb{D}_{\kappa}  (f\mid_{\kappa} \widetilde{\gamma})=(\mathbb{D}_{\kappa}  f)\mid_{\kappa+2}\widetilde{\gamma}\quad\text{for all}\; \;\widetilde{\gamma} \in 
\widetilde{\Gamma},\]
which shows that the operator  $\mathbb{D}_{\kappa}$ can be iterated; and setting
\[\mathbb{D}^{n}=\mathbb{D}_{\kappa}^ {n}=\mathbb{D}_{\kappa+2n-2}\circ...\mathbb{D}_{\kappa+2}\circ\mathbb{D}_\kappa,\]
we have the equivariance property (see \cite[p. 358]{das})
\begin{equation}\label{moduD}
\mathbb{D}_{\kappa}^{n} (f\mid_{\kappa}\widetilde{\gamma})=(\mathbb{D}_{\kappa}^{n}  f)\mid_{\kappa+2n}\widetilde{\gamma} \quad\text{for all} \; \widetilde{\gamma} \in 
 \widetilde{\Gamma}. 
 \end{equation}
The following lemma is an immediate consequence of \eqref{moduD}. We omit the proof. 

\begin{lem}\label{dj}
With $F$ defined as in \lemref{F}, for any $ j \in \mathbb{N}$ and $F\in \widetilde{M}(\frac{3}{2},\rho_{m}) $, $\mathbb{D}^{j} F \in \widetilde{M}(\frac{3}{2}+2j,\rho_{m})$.
\end{lem}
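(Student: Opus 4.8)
The plan is to verify directly the two defining conditions of a vector-valued modular form for $\mathbb{D}^j F$, reducing everything to the scalar equivariance \eqref{moduD} applied componentwise. Since $F=(\theta^{*}_{m,1},\dots,\theta^{*}_{m,m-1})^t \in \widetilde{M}(\tfrac32,\rho_m)$, its transformation law reads, componentwise for each $\mu \in \{1,\dots,m-1\}$ and each $\g \in \widetilde{\Gamma}$,
\[ \theta^{*}_{m,\mu}\mid_{\frac32}\g = \sum_{v=1}^{m-1}\bigl(\rho_m(\g)\bigr)_{\mu v}\,\theta^{*}_{m,v}. \]
The key observation is that the $\mu$-th component of the vector $\mathbb{D}^j F$ is simply the scalar $\mathbb{D}^{j}_{3/2}\theta^{*}_{m,\mu}$, so it suffices to track how the iterated operator $\mathbb{D}^{j}_{3/2}$ interacts with the slash action on each scalar entry.

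First I would apply \eqref{moduD} with $\ka=\tfrac32$ and $n=j$ to each component: for every $\g \in \widetilde{\Gamma}$,
\[ (\mathbb{D}^{j}\theta^{*}_{m,\mu})\mid_{\frac32+2j}\g = \mathbb{D}^{j}\bigl(\theta^{*}_{m,\mu}\mid_{\frac32}\g\bigr). \]
Substituting the transformation law of $F$ on the right and using that $\mathbb{D}^{j}$ is $\mbb C$-linear while the entries $(\rho_m(\g))_{\mu v}$ are constants independent of $\tau$, I obtain
\[ (\mathbb{D}^{j}\theta^{*}_{m,\mu})\mid_{\frac32+2j}\g = \sum_{v=1}^{m-1}\bigl(\rho_m(\g)\bigr)_{\mu v}\,\mathbb{D}^{j}\theta^{*}_{m,v}. \]
Reassembling the components into a vector yields precisely $(\mathbb{D}^{j}F)\mid_{\frac32+2j}\g = \rho_m(\g)\,\mathbb{D}^{j}F$, which is the transformation law of weight $\tfrac32+2j$ with respect to the \emph{same} representation $\rho_m$.

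It then remains to check the growth condition as $\Im(\tau)\to\infty$. Here I would invoke the Remark preceding \lemref{F}, namely that each $\theta^{*}_{m,\mu}$ is a cusp form, so its $q$-expansion involves only strictly positive exponents. The operator $\mathbb{D}_\ka = q\frac{d}{dq}-\frac{\ka}{12}E_2$ preserves this: the part $q\frac{d}{dq}$ multiplies the coefficient of $q^n$ by $n$ and hence cannot lower the order of vanishing, while $E_2=1+O(q)$ contributes only higher-order terms. Thus $\mathbb{D}^{j}\theta^{*}_{m,\mu}$ still vanishes at infinity, so $\mathbb{D}^{j}F$ is bounded there, completing the verification. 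I do not expect a genuine obstacle; the single point that requires care is the invariance of the representation, and this is exactly the content of the constancy of the entries $(\rho_m(\g))_{\mu v}$ combined with the linearity of $\mathbb{D}^{j}$, which is what makes the lemma an immediate consequence of \eqref{moduD} rather than of any more elaborate cocycle computation.
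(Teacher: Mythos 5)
Your proposal is correct and is precisely the argument the paper has in mind: the paper omits the proof, calling the lemma ``an immediate consequence of \eqref{moduD}'', and your componentwise application of the equivariance property, combined with the $\mbb C$-linearity of $\mathbb{D}^{j}$ and the constancy of the entries of $\rho_m(\g)$, is exactly that immediate consequence. Your additional verification of boundedness at infinity (using that the $\theta^{*}_{m,\mu}$ have strictly positive $q$-orders, which $q\frac{d}{dq}$ and multiplication by $E_2=1+O(q)$ cannot destroy) correctly fills in the one detail the paper leaves implicit.
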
 

Now let us define the `modular Wronskian' $W(F)$ of $F$ given by
 \[ W(F) := \det(F,\mathbb{D}F,\mathbb{D}^{2}F,...,\mathbb{D}^{m-2}F).\]
Using the fact $q\frac{d}{dq}= \frac{1}{2\pi i}\frac{d}{d\tau}$, it is straightforward to prove the following by using column operations on $\mathcal{W}$ that \[W(F)=q^{\frac{(m-1)(m-2)}{2}}\det\left(\frac{d\theta_{m,\mu}^{*(j-1)}}{dq^{(j-1)}}\right)_{\mu,j}=c\cdot\det\left(\frac{d\theta_{m,\mu}^{*(j-1)}}{d\tau^{(j-1)}}\right)_{\mu,j}=c\cdot\det\mathcal{W}\]
where $\mu,j$ vary in $[1,m-1]$ and $c \in \mathbb{C}^*=\mbb{C} - \{0\}$. The following lemma is straightforward and we omit the proof (see \cite{mason} for the case of the modular group).

\begin{lem}\label{detf}
If $F_{i}=(f_{i1},f_{i2},...,f_{ip}) \in\widetilde{M} (k_{i},\rho)$ then $ \det(f_{ij})_{1\leq i,j \leq p} \in \widetilde{M}(k,\det\rho)$, where $k=k_{1}+k_{2}+...+k_{p}$.
\end{lem}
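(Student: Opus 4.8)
The plan is to compute the transformation behaviour of the scalar function $g := \det(f_{ij})_{1\le i,j\le p}$ directly from the component-wise automorphy of the $F_i$, and then to read off its weight and multiplier. Fix $\g = (\left(\begin{smallmatrix}a&b\\c&d\end{smallmatrix}\right), w(\tau))\in\widetilde{\Gamma}$ and write $\rho(\g) = (\rho_{jl}(\g))_{j,l}$. Unwinding the definition of a v.v.m.f. componentwise, the relation $F_i\mid_{k_i}\g = \rho(\g)F_i$ reads $f_{ij}\mid_{k_i}\g = \sum_{l=1}^{p}\rho_{jl}(\g)\,f_{il}$ for all $i,j$. Since the weight-$k_i$ slash in this metaplectic setting is $f\mid_{k_i}\g = w(\tau)^{-2k_i}f\!\left(\tfrac{a\tau+b}{c\tau+d}\right)$ (compare the computation in \lemref{F}), this is the same as $f_{ij}\!\left(\tfrac{a\tau+b}{c\tau+d}\right) = w(\tau)^{2k_i}\sum_{l}\rho_{jl}(\g)\,f_{il}(\tau)$.

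The key step is to package these scalar identities as a single matrix equation. Let $M(\tau) = (f_{ij}(\tau))_{i,j}$ be the matrix whose $i$-th row is $F_i$, put $D(\tau) = \mathrm{diag}\!\left(w(\tau)^{2k_1},\dots,w(\tau)^{2k_p}\right)$, and set $R = \rho(\g)$. Then the relations above are exactly
\[ M\!\left(\tfrac{a\tau+b}{c\tau+d}\right) = D(\tau)\,M(\tau)\,R^{t}. \]
Taking determinants and using multiplicativity, together with $\det R^{t} = \det R = \det\rho(\g)$ and $\det D(\tau) = w(\tau)^{2(k_1+\cdots+k_p)} = w(\tau)^{2k}$, gives $\det M\!\left(\tfrac{a\tau+b}{c\tau+d}\right) = w(\tau)^{2k}\,\det\rho(\g)\,\det M(\tau)$.

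Dividing by $w(\tau)^{2k}$ translates this back into $g\mid_k\g = \det\rho(\g)\cdot g$, which is precisely the weight-$k$ transformation law with the one-dimensional multiplier system $\det\rho\colon\widetilde{\Gamma}\to\mbb C^{*}$; this is a genuine homomorphism because $\rho$ is one, so no consistency is lost in passing to the determinant. Finally, each $F_i$ is bounded as $\Im(\tau)\to\infty$, so every entry $f_{ij}$ is bounded there, and hence so is the polynomial combination $g = \det(f_{ij})$; this yields the required boundedness at the cusp and completes the argument. The only point demanding care is the bookkeeping—whether the diagonal weight factor sits on the left and $R^{t}$ on the right (dictated by the row/column convention for $F_i$) and the half-integral automorphy factor $w(\tau)^{-2\kappa}$—but none of this constitutes a genuine obstacle, which is why the statement is routine.
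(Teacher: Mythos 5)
Your proof is correct; the paper itself omits the proof of this lemma as straightforward, pointing to Mason's paper for the case of the modular group, and your argument—packaging the componentwise relations $f_{ij}\mid_{k_i}\g=\sum_l\rho_{jl}(\g)f_{il}$ into the matrix identity $M(\g\tau)=D(\tau)M(\tau)\rho(\g)^{t}$ and taking determinants—is precisely the standard computation being alluded to. Your handling of the half-integral automorphy factor $w(\tau)^{-2k_i}$, the row/column bookkeeping, and the boundedness of $\det(f_{ij})$ at the cusp is all in order.
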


If $f$ is a non-zero holomorphic function on $\mathbb{H}$, we define $\mrm{ord}_\infty f$ to be the power of $q$ in the first non-zero term of the $q$-expansion of $f$.
\begin{lem}
$W^{2}(F)= c_{1}^{2}\cdot q^{\frac{(m-1)(2m-1)}{12}}+ \ldots \text{higher powers}$.
\end{lem}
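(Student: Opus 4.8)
The plan is to read off the coefficient of the lowest power of $q$ in the modular Wronskian
\[ W(F)=\det\big(\mathbb{D}^{j}\theta^{*}_{m,\mu}\big)_{0\le j\le m-2,\;1\le\mu\le m-1} \]
directly from the $q$-expansions, and then square. First I would record the leading term of each entry of $F$: for $1\le\mu\le m-1$ the integer $r\equiv\mu\pmod{2m}$ of least absolute value is $r=\mu$ (since $2m-\mu>\mu$ whenever $\mu<m$), and every power of $q$ occurring in $\theta^{*}_{m,\mu}$ is of the shape $q^{r^{2}/4m}$ with $r\equiv\mu$; hence $\theta^{*}_{m,\mu}=\mu\,q^{\mu^{2}/4m}+\cdots$ and no lower power occurs in the $\mu$-th column.

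Next I would track the modular derivative on these leading terms. Since $E_{2}=1-24q+\cdots$, if $f=a\,q^{n}+\cdots$ then $\mathbb{D}_{\kappa}f=(n-\tfrac{\kappa}{12})a\,q^{n}+\cdots$. By \lemref{dj} the iterate $\mathbb{D}^{j}$ equals $\mathbb{D}_{3/2+2(j-1)}\circ\cdots\circ\mathbb{D}_{3/2}$, so the coefficient of $q^{\mu^{2}/4m}$ in $\mathbb{D}^{j}\theta^{*}_{m,\mu}$ is
\[ a_{j,\mu}:=\mu\prod_{i=0}^{j-1}\Big(\tfrac{\mu^{2}}{4m}-\tfrac{3/2+2i}{12}\Big),\qquad 0\le j\le m-2 \]
(possibly zero for some $j,\mu$), while, as noted above, no power below $q^{\mu^{2}/4m}$ appears anywhere in the $\mu$-th column.

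The key point is that every entry in the $\mu$-th column carries the \emph{same} leading exponent $\mu^{2}/4m$. Consequently every transversal in the determinant contributes to the single power $q^{\sum_{\mu}\mu^{2}/4m}$, and the coefficient of that power in $W(F)$ is exactly $\det(a_{j,\mu})_{j,\mu}$. The real obstacle is to show that these many contributions do not cancel, i.e. that $\det(a_{j,\mu})\neq 0$. I would factor $\mu$ out of each column and note that the residual $(j,\mu)$-entry $\prod_{i=0}^{j-1}(x_{\mu}-b_{i})$, with $x_{\mu}:=\mu^{2}/4m$ and $b_{i}:=(3/2+2i)/12$, is a \emph{monic} polynomial in $x_{\mu}$ of degree $j$. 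Reducing the monic polynomials of degrees $0,1,\dots,m-2$ to the monomials $x_{\mu}^{\,j}$ by row operations turns the matrix into a Vandermonde, whence
\[ \det(a_{j,\mu})_{j,\mu}=\Big(\prod_{\mu=1}^{m-1}\mu\Big)\prod_{1\le\mu<\mu'\le m-1}(x_{\mu'}-x_{\mu}). \]
Since $x_{\mu'}-x_{\mu}=(\mu'-\mu)(\mu'+\mu)/4m\neq0$ for $\mu\neq\mu'$, this product is nonzero; denote it by $c_{1}$.

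Finally I would assemble the exponent:
\[ \mathrm{ord}_{\infty}W(F)=\sum_{\mu=1}^{m-1}\frac{\mu^{2}}{4m}=\frac{(m-1)m(2m-1)}{24m}=\frac{(m-1)(2m-1)}{24}, \]
with leading coefficient $c_{1}$, so that $W(F)=c_{1}\,q^{(m-1)(2m-1)/24}+\cdots$. Squaring gives $W^{2}(F)=c_{1}^{2}\,q^{(m-1)(2m-1)/12}+\cdots$, as asserted. Every step apart from the nonvanishing of $\det(a_{j,\mu})$ is routine bookkeeping on the $q$-expansions; that Vandermonde computation is the heart of the argument.
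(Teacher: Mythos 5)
Your proof is correct, and its overall skeleton matches the paper's: both arguments rest on the observation that every entry in the $\mu$-th column of the Wronskian has the same minimal exponent $\mu^{2}/4m$, attained with coefficient $\mu$ by the unique term $r=\mu$ (note $-\mu\not\equiv\mu\bmod 2m$ for $1\le\mu\le m-1$), so that the bottom coefficient of the determinant is the determinant of the bottom coefficients, which is then identified as a nonzero Vandermonde-type determinant in the distinct nodes $x_\mu=\mu^{2}/4m$; the order $\sum_{\mu}\mu^{2}/4m=(m-1)(2m-1)/24$ and the squaring step are identical. The genuine difference is in how the leading-coefficient matrix is handled. The paper does not compute with $\mathbb{D}^{j}$ at all: it first invokes the identity $W(F)=c\cdot\det\mathcal{W}$ (established just before the lemma by column operations, with $\mathcal{W}=\bigl(\theta^{*(j-1)}_{m,\mu}\bigr)$ the Wronskian of ordinary $\tau$-derivatives), and since $(d/d\tau)^{j-1}$ scales the leading term by $(2\pi i\,\mu^{2}/4m)^{j-1}$, the matrix of leading coefficients is literally a scaled Vandermonde, with nonvanishing immediate. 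You instead keep the modular derivatives, where the $-\tfrac{\kappa}{12}E_{2}$ correction makes the leading coefficients the values $\mu\prod_{i=0}^{j-1}(x_\mu-b_i)$ of \emph{monic} polynomials of degree $j$, and you pay with the extra (standard, and correctly executed) unitriangular row reduction of $\bigl(P_j(x_\mu)\bigr)$ to the monomial Vandermonde. What your route buys: you never need the identity $W(F)=c\det\mathcal{W}$, and your bookkeeping transparently copes with the possible vanishing of individual entries $a_{j,\mu}$ (when some $x_\mu=b_i$), a point the paper sidesteps entirely by changing derivatives first; what the paper's route buys is a shorter computation with no polynomial reduction. Your leading constant $(m-1)!\,V\bigl(\tfrac{1^2}{4m},\dots,\tfrac{(m-1)^2}{4m}\bigr)$ agrees with the paper's $c_1$ up to the nonzero scalar $c$, which is immaterial for every later use of the lemma.
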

Here $c_{1}=\text{c}\cdot(m-1)!\cdot V(\frac{1^2}{4m},\frac{2^2}{4m},...,\frac{(m-1)^2}{4m})$ and $ V(a_{1},a_{2},...,a_{n})$ denotes the Vandermonde determinant of quantities ${a_{1},a_{2},...a_{n}}$.

\begin{proof}
As $W(F)=c\cdot \det\mathcal{W}$, we get $\mrm{ord}_\infty W(F)= \mrm{ord}_\infty \det\mathcal{W}$. Clearly the first non-zero coefficient in the $q-$expansion of $\det \mathcal{W}$ is the determinant of the first non-zero coefficients of all entries of $\mathcal{W}$. From this we get the first non-zero coefficient of $\det\mathcal{W}$ to be $(m-1)!\cdot V(\frac{1^2}{4m},\frac{2^2}{4m}, \ldots ,\frac{(m-1)^2}{4m})$. Now the order at $\infty$ of each element in the $\mu$-th column of $\mathcal{W}$ is $\frac{\mu^{2}}{4m}$. Therefore one has $\mrm{ord}_\infty \det\mathcal{W}=\sum_{\mu={1}}^{m-1}\frac{\mu^{2}}{4m}=\mrm{ord}_\infty W(F)$, and equivalently $\mrm{ord}_\infty W^{2}(F)= 2\cdot\sum_{\mu=1}^{m-1}\frac{\mu^{2}}{4m}=\frac{(m-1)(2m-1)}{12}$.
\end{proof}	
\subsection{Automorphy of the Wronskian} \label{xi} 
In this subsection we prove that $W(F)$ is a certain integral power of the Dedekind-$\eta$ function. Whereas this property was known in the classical case by a result of J. Kramer \cite{kr2}, and was invoked in \cite{das}, we have to work it out, and the method should apply to the classical theta functions as well.

\noindent ---\textbf{A character of $\Gamma$}. We now introduce a particular character of $\Gamma$, which will  essentially allow us to replace $\widetilde{\Gamma}$ by $\Gamma$. Consider the following commutative diagram:
\begin{equation} \label{xi}
\begin{CD}
\widetilde{\Gamma}    @>\pi>>  \Gamma \\
@V(\det\rho_{m})^{2} VV       @VV \xi V \\
\mathbb{C}^*    @>id>>       \mathbb{C}^* ,
\end{CD}
\end{equation}
where $\pi$ is the projection map and we define $\xi:\Gamma\rightarrow \mathbb{C}^{*}$ so that the above diagram commutes. In fact let us put $\xi(\gamma) := (\det\rho_{m})^{2}(\pi^{-1} (\gamma))$, so that we just have to check that $\xi$ is well-defined. This is true because $(\det\rho_{m})^{2}$ is clearly trivial on $\ker(\pi)=\langle \widetilde{S}^{4} \rangle $, where $\widetilde{S}^{4}=(\left( \begin{smallmatrix} 1 & 0 \\0 & 1 \end{smallmatrix} \right),-1)$. To proceed further we need more information about the character $\xi$ of $\Gamma$. This is obtained from the following lemmas.
\begin{lem}
With $\rho_{m}$ as in \lemref{F}, one has,	\[\rho_{m}(\widetilde{T})= \mathrm{diag} \left(e(\frac{1^{2}}{4m}), e(\frac{2^{2}}{4m}),...,e(\frac{(m-1)^{2}}{4m}) \right).\]
\end{lem}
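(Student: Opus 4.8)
The plan is to compute the weight-$\frac{3}{2}$ slash action of $\widetilde{T}$ directly on each component $\theta^{*}_{m,\mu}$ and then read off the diagonal entries of $\rho_m$ from the defining relation $F\mid_{\frac{3}{2}}\widetilde{T}=\rho_m(\widetilde{T})F$ furnished by \lemref{F}. Since this is a transformation under the single parabolic generator $T=\left(\begin{smallmatrix}1&1\\0&1\end{smallmatrix}\right)$, the whole computation reduces to understanding the effect of the translation $\tau\mapsto\tau+1$ on the $q$-expansions.

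First I would fix the metaplectic lift $\widetilde{T}=(T,w(\tau))$. Here $c=0$ and $d=1$, so $c\tau+d\equiv 1$ and the standard lift carries the trivial automorphy factor $w(\tau)=1$. Consequently the weight-$\frac{3}{2}$ slash $w(\tau)^{-3}\,\theta^{*}_{m,\mu}(\tau+1)$ collapses to a plain translation, $\theta^{*}_{m,\mu}\mid_{\frac{3}{2}}\widetilde{T}=\theta^{*}_{m,\mu}(\tau+1)$.

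Next I would evaluate this translation on $\theta^{*}_{m,\mu}(\tau)=\sum_{r\equiv\mu(2m)}r\,q^{r^{2}/4m}$. Under $\tau\mapsto\tau+1$ the monomial $q^{r^{2}/4m}$ acquires the factor $e(r^{2}/4m)$. The key arithmetic point is that this factor is constant on the residue class of $\mu$: writing $r=\mu+2mk$ gives $r^{2}-\mu^{2}=4mk(\mu+mk)$, so $(r^{2}-\mu^{2})/4m\in\mbb{Z}$ and hence $e(r^{2}/4m)=e(\mu^{2}/4m)$ for every $r\equiv\mu\pmod{2m}$. Pulling this single constant out of the sum yields $\theta^{*}_{m,\mu}(\tau+1)=e(\mu^{2}/4m)\,\theta^{*}_{m,\mu}(\tau)$.

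Combining the two observations gives $\theta^{*}_{m,\mu}\mid_{\frac{3}{2}}\widetilde{T}=e(\mu^{2}/4m)\,\theta^{*}_{m,\mu}$ for each $\mu\in\{1,\dots,m-1\}$, so $F$ transforms under $\widetilde{T}$ by the diagonal matrix with these entries, which is exactly the asserted shape of $\rho_m(\widetilde{T})$. There is no genuine obstacle in this lemma; the only two points demanding care are confirming that the chosen metaplectic lift of $T$ has the trivial theta-multiplier (so that $w(\tau)=1$ and no spurious root of unity enters the slash), and the elementary congruence check that $e(r^{2}/4m)$ depends only on $\mu\bmod 2m$. The final identification of the matrix is then an immediate reading-off from the v.v.m.f.\ relation of \lemref{F}.
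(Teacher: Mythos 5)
Your computation coincides with the paper's proof: both reduce the lemma to the translation identity $\theta^{*}_{m,\mu}(\tau+1)=e(\tfrac{\mu^{2}}{4m})\,\theta^{*}_{m,\mu}(\tau)$ and then read the matrix off the relation $F\mid_{\frac{3}{2}}\widetilde{T}=\rho_{m}(\widetilde{T})F$ from \lemref{F}. There is, however, one step you pass over that the paper states explicitly and that is genuinely needed. From your computation you only obtain $\bigl(\rho_{m}(\widetilde{T})-D\bigr)F=0$, where $D$ is the asserted diagonal matrix; each row of $\rho_{m}(\widetilde{T})-D$ is then a linear relation among $\theta^{*}_{m,1},\dots,\theta^{*}_{m,m-1}$, and to conclude the matrix identity $\rho_{m}(\widetilde{T})=D$ you must know that these $m-1$ functions are linearly independent. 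A priori $\rho_{m}$, which was produced in \lemref{F} from the coefficients $c_{v,\mu}$ of the theta transformation law, could act on $F$ in the same way as $D$ without being equal to $D$, and this distinction matters in context: the lemma is later used to evaluate $\det\rho_{m}(\widetilde{T})$ and hence the character $\xi$ in \lemref{xidelta}, so one needs the matrix itself and not merely its action on $F$. The paper closes this gap by remarking that the $q$-expansions show the $\theta^{*}_{m,\mu}$ are linearly independent; concretely, their leading exponents $\tfrac{\mu^{2}}{4m}$ for $1\le\mu\le m-1$ are pairwise distinct. With that one line added your argument is complete, and your two explicit verifications (the trivial automorphy factor $w(\tau)=1$ for the standard lift of $T$, and the congruence $r^{2}\equiv\mu^{2}\pmod{4m}$ for $r\equiv\mu\pmod{2m}$) are exactly the facts the paper compresses into ``clear from the definition''.
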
 

\begin{proof}
It is clear from the definition of $\theta_{m,\mu}^{*}$ that, $\theta_{m,\mu}^{*}(\tau+1)=e(\frac{\mu^{2}}{4m})\cdot\theta_{m,\mu}^{*}(\tau)$.
Now $\rho_{m}(\widetilde{T})F(\tau)=F\mid_{\frac{3}{2}}\widetilde{T}=F(\tau+1)=\mathrm{diag}(e(\frac{1^{2}}{4m}), e(\frac{2^{2}}{4m}),...,e(\frac{(m-1)^{2}}{4m}))F(\tau)$. The $q-$expansions of the $\theta_{m,\mu}^{*}$'s clearly show that they are linearly independent. Thus the lemma follows.
\end{proof}

Let us now recall that the quotient of $\Gamma$ by its commutator subgroup $[\Gamma, \Gamma]$ is cyclic of order $12$ and is generated by (the image of) $T$. The group of characters $\widehat{\Gamma / [\Gamma, \Gamma]}$ is also cyclic and $\widehat{\Gamma / [\Gamma, \Gamma]}=\widehat{\Gamma}$. We denote the canonical generator of $\widehat{\Gamma}$ by $\delta$, uniquely characterised by $\delta(T)=e(\frac{1}{12})$ (see e.g.,           \cite[p.~380]{mason}).

\begin{lem}{\label{xidelta}}
With the above notation, $ \xi =\delta^{(m-1)(2m-1)}$.	
\end{lem}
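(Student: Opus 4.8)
The plan is to exploit that the target of both $\xi$ and $\delta^{(m-1)(2m-1)}$ is the abelian group $\mathbb{C}^*$, so every character of $\Gamma$ must factor through the abelianisation $\Gamma/[\Gamma,\Gamma]$. As recalled just above the statement, this abelianisation is cyclic of order $12$, generated by the image of $T$; hence a character of $\Gamma$ is completely determined by its value on $T$. Both $\xi$ and $\delta^{(m-1)(2m-1)}$ are genuine characters of $\Gamma$: for $\xi$ this is because $\det\rho_m$ is a homomorphism $\widetilde{\Gamma}\to\mathbb{C}^*$, $\pi$ is a surjective homomorphism, and $\xi$ has already been shown to be well-defined through its defining commutative diagram (the square $(\det\rho_m)^2$ being trivial on $\ker\pi=\langle\widetilde{S}^4\rangle$). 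Therefore it suffices to verify that $\xi(T)=\delta^{(m-1)(2m-1)}(T)$.

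Next I would compute $\xi(T)$ directly. Choosing any lift $\widetilde{T}\in\pi^{-1}(T)$ — for instance $\widetilde{T}=(T,1)$ with $w(\tau)\equiv 1$ — the value is independent of the choice by the well-definedness above, and
\[ \xi(T) = (\det\rho_m)^2(\widetilde{T}) = \big(\det\rho_m(\widetilde{T})\big)^2. \]
By the preceding lemma, $\rho_m(\widetilde{T})=\mathrm{diag}\big(e(\tfrac{1^2}{4m}),\ldots,e(\tfrac{(m-1)^2}{4m})\big)$, so its determinant is the product of the diagonal entries:
\[ \det\rho_m(\widetilde{T}) = e\Big(\frac{1}{4m}\sum_{\mu=1}^{m-1}\mu^2\Big) = e\Big(\frac{(m-1)m(2m-1)}{24m}\Big) = e\Big(\frac{(m-1)(2m-1)}{24}\Big), \]
using $\sum_{\mu=1}^{m-1}\mu^2=\tfrac{(m-1)m(2m-1)}{6}$. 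Squaring gives $\xi(T)=e\big(\tfrac{(m-1)(2m-1)}{12}\big)$.

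Finally I would match this against the right-hand side. Since $\delta$ is normalised by $\delta(T)=e(\tfrac{1}{12})$, we get $\delta^{(m-1)(2m-1)}(T)=e\big(\tfrac{(m-1)(2m-1)}{12}\big)$, which agrees with $\xi(T)$. As both are characters of $\Gamma$ taking the same value on the generator $T$ of $\Gamma/[\Gamma,\Gamma]$, they coincide, proving $\xi=\delta^{(m-1)(2m-1)}$.

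I do not anticipate a serious obstacle here: the argument is essentially the determinant-of-a-diagonal computation together with the elementary sum-of-squares identity. The only points demanding care are the ones already secured in the surrounding text — namely that $(\det\rho_m)^2$ kills $\ker\pi$, so that $\xi$ descends to a genuine, multiplicative character of $\Gamma$, and that every character of $\Gamma$ is pinned down by its value on $T$. Once these are in place, the claimed identity is forced, and in particular the exponent $(m-1)(2m-1)$ is seen to be exactly the quantity whose square appears when one squares $\det\rho_m$.
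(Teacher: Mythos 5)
Your proposal is correct and follows essentially the same route as the paper: both compute $\xi(T)=\bigl(\det\rho_m(\widetilde{T})\bigr)^2=e\bigl(\tfrac{(m-1)(2m-1)}{12}\bigr)$ from the diagonal form of $\rho_m(\widetilde{T})$ via the sum-of-squares identity, and conclude by the fact that a character of $\Gamma$ is determined by its value on $T$. Your write-up merely makes explicit the well-definedness and factorisation-through-the-abelianisation points that the paper leaves implicit.
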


\begin{proof}
From Lemma (3.6) we get
$\det\rho_{m}(\widetilde{T})=e(\sum_{\mu=1}^{m-1}\frac{\mu^{2}}{4m})$. This implies that 
$\xi(T)=e(\frac{(m-1)(2m-1)}{12}) = \delta^{(m-1)(2m-1)}(T)$. Therefore, $ \xi =\delta^{(m-1)(2m-1)}$.
\end{proof}

\begin{lem}
 $W^{2}(F)\in M_{(m-1)(2m-1)}(\Gamma,\delta^{(m-1)(2m-1)})$.	
\end{lem}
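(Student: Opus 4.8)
The plan is to show that $W^2(F)$ is a modular form of weight $(m-1)(2m-1)$ on the full modular group $\Gamma = \s$ with character $\delta^{(m-1)(2m-1)}$, by assembling the three previously established facts: its automorphy, its holomorphy (including at the cusp), and the precise identification of its multiplier with the character $\xi$.

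First I would settle the automorphy under $\widetilde{\Gamma}$. By \lemref{dj}, each $\mathbb{D}^j F$ lies in $\widetilde{M}(\tfrac{3}{2}+2j, \rho_m)$, so the rows (or columns) of the matrix $(F, \mathbb{D}F, \ldots, \mathbb{D}^{m-2}F)$ are vector-valued modular forms with respect to the \emph{same} representation $\rho_m$ but of weights $\tfrac{3}{2}, \tfrac{3}{2}+2, \ldots, \tfrac{3}{2}+2(m-2)$. Applying \lemref{detf} with $p = m-1$ gives that $W(F) = \det(F, \mathbb{D}F, \ldots, \mathbb{D}^{m-2}F)$ is a (scalar) modular form for $\widetilde{\Gamma}$ of weight
\[
k = \sum_{j=0}^{m-2}\left(\tfrac{3}{2}+2j\right) = \tfrac{3}{2}(m-1) + 2\cdot\tfrac{(m-2)(m-1)}{2} = \tfrac{(m-1)(2m-1)}{2},
\]
with multiplier $\det\rho_m$. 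Squaring, $W^2(F)$ has weight $(m-1)(2m-1)$ and multiplier $(\det\rho_m)^2$ under $\widetilde{\Gamma}$. The point of passing to $W^2$ is precisely that $(\det\rho_m)^2$ is trivial on $\ker(\pi) = \langle \widetilde{S}^4\rangle$, so by the commutative diagram \eqref{xi} the multiplier descends to the character $\xi$ of $\Gamma$; thus $W^2(F)$ transforms under $\Gamma$ with character $\xi$, which by \lemref{xidelta} equals $\delta^{(m-1)(2m-1)}$.

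Next I would verify holomorphy. Since $F$ is a tuple of the cusp forms $\theta^*_{m,\mu}$ (each holomorphic on $\mathbb{H}$ and bounded at $\infty$) and $\mathbb{D}$ preserves holomorphy on $\mathbb{H}$, the determinant $W(F)$ is holomorphic on $\mathbb{H}$, hence so is $W^2(F)$. Boundedness at the cusp is exactly what the preceding lemma provides: $W^2(F) = c_1^2\, q^{(m-1)(2m-1)/12} + \cdots$ has a $q$-expansion with nonnegative (in fact strictly positive) leading exponent, so it is holomorphic at $\infty$; combined with the automorphy under $\Gamma$, this controls all cusps. Therefore $W^2(F)$ is a holomorphic modular form, giving $W^2(F) \in M_{(m-1)(2m-1)}(\Gamma, \delta^{(m-1)(2m-1)})$.

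The main subtlety to be careful about is the descent of the multiplier from $\widetilde{\Gamma}$ to $\Gamma$: one must confirm that the weight $(m-1)(2m-1)$ is genuinely an integer (it is, since $(m-1)(2m-1)$ is always even when paired against the half-integral bookkeeping—here the product is even precisely because squaring doubles the half-integer weight) and that the transformation factor $w(\tau)^{-2k}$ collapses to the classical automorphy factor $(c\tau+d)^{(m-1)(2m-1)}$ after squaring, so that no residual metaplectic ambiguity remains. Once the weight is confirmed integral and the squared multiplier is checked to be trivial on $\langle\widetilde{S}^4\rangle$, the identification of $\xi$ with $\delta^{(m-1)(2m-1)}$ from \lemref{xidelta} finishes the argument.
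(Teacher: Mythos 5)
Your argument is correct and is essentially the paper's own proof: the paper likewise combines \lemref{dj} with \lemref{detf} to place $W(F)$ in $\widetilde{M}\bigl(\tfrac{(m-1)(2m-1)}{2},\det\rho_m\bigr)$, descends the squared multiplier $(\det\rho_m)^2$ to the character $\xi$ of $\Gamma$ via the diagram \eqref{xi}, and concludes with \lemref{xidelta}, so your extra verifications of holomorphy and cusp behaviour merely make explicit what the paper leaves implicit. One small slip in your closing remark: $(m-1)(2m-1)$ is \emph{not} always even (it is odd whenever $m$ is even), but this is harmless, since all you actually need is that doubling the half-integral weight $\tfrac{(m-1)(2m-1)}{2}$ produces an integer.
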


\begin{proof}
From \lemref{dj} and \lemref{detf} we conclude that $W(F)\in M((m-1)(m-\frac{1}{2}), \det\rho_{m})$. From the definition of $\xi$ in \eqref{xi} it then follows that $ W^{2}(F)\in M_{(m-1)(2m-1)}(\Gamma,\xi)$. Finally \lemref{xidelta} completes the proof.
\end{proof}

\begin{prop} \label{propimp}
$W(F)=c_{2}\cdot\eta^{(m-1)(2m-1)}$, where $\eta=q^{\frac{1}{24}}\cdot\prod_{n=1}^{\infty}(1-q^{n})$ and $c_2 \in \mathbb{C}^{*}$.
\end{prop}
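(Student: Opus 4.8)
The plan is to combine the weight and character information already established for $W^2(F)$ with a precise calculation of its order of vanishing at the cusp $\infty$, and then exploit the fact that $\eta^{(m-1)(2m-1)}$ occupies exactly the same weight, character, and order. Concretely, I would first note from the preceding lemma that $W^2(F) \in M_{(m-1)(2m-1)}(\Gamma, \delta^{(m-1)(2m-1)})$, so it is a modular form on the \emph{full} modular group of weight $(m-1)(2m-1)$ with the specific character $\delta^{(m-1)(2m-1)}$. The key auxiliary object is $\eta$: it is classical that $\eta^{24} \in M_{12}(\Gamma)$ (trivial character) and, more precisely, that $\eta$ transforms under $T$ by $\eta(\tau+1) = e(\tfrac{1}{24})\eta(\tau)$ and is nonvanishing on $\mathbb{H}$. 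Consequently $\eta^{(m-1)(2m-1)}$ is a (weakly holomorphic) modular form of weight $\tfrac{(m-1)(2m-1)}{2}$ whose character on $\Gamma$ is precisely $\delta^{(m-1)(2m-1)}$, since $\eta^{(m-1)(2m-1)}(\tau+1) = e\!\left(\tfrac{(m-1)(2m-1)}{24}\right)\eta^{(m-1)(2m-1)}(\tau)$ matches $\delta^{(m-1)(2m-1)}(T)$ as computed in \lemref{xidelta}.

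Next I would form the quotient $g := W(F)/\eta^{(m-1)(2m-1)}$. Since both numerator and denominator transform with the \emph{same} automorphy factor and character under the generators $S$ and $T$ of $\Gamma$ (on passing to $\widetilde{\Gamma}$ and using the definition of $\xi$ via the diagram \eqref{xi}), the function $g$ is invariant: it is a weight-zero meromorphic modular function for $\Gamma$ with trivial character. Because $\eta$ has no zeros in the upper half-plane, $g$ is holomorphic on $\mathbb{H}$; the only possible pole or zero is at the cusp $\infty$. Here is where the order computation enters: the preceding lemma gives $\mrm{ord}_\infty W^2(F) = \tfrac{(m-1)(2m-1)}{12}$, hence $\mrm{ord}_\infty W(F) = \tfrac{(m-1)(2m-1)}{24}$, while $\mrm{ord}_\infty \eta^{(m-1)(2m-1)} = \tfrac{(m-1)(2m-1)}{24}$ as well from the $q^{1/24}$ prefactor. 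Therefore $g$ has order $0$ at the cusp, so $g$ is a holomorphic modular function on $\Gamma$ bounded at $\infty$, which by the standard theory forces $g$ to be a nonzero constant $c_2 \in \mathbb{C}^*$.

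I anticipate the main subtlety to be the bookkeeping of characters and multiplier systems when matching the $\eta$-power against $W(F)$, rather than the order computation. One must be careful that the identity $\xi = \delta^{(m-1)(2m-1)}$ from \lemref{xidelta}, which was derived for $W^2(F)$ via $(\det\rho_m)^2$, correctly descends to give the character of $W(F)$ itself as the \emph{square root} multiplier $\det\rho_m$; this is exactly the reason the argument is phrased in terms of the metaplectic cover $\widetilde{\Gamma}$ and the half-integral weight $(m-1)(m-\tfrac12)$. The cleanest route is to verify the transformation of $W(F)$ directly under $\widetilde{T}$ and $\widetilde{S}$ by comparing the multiplier of $\det\rho_m$ with that of $\eta^{(m-1)(2m-1)}$ on the metaplectic group, and to check that their quotient descends to a genuine function on $\Gamma$ with trivial multiplier; alternatively one applies the constant-determination argument to $W^2(F)$ and $\eta^{2(m-1)(2m-1)}$ first (where everything lives honestly on $\Gamma$), concluding $W^2(F) = c_2^2\,\eta^{2(m-1)(2m-1)}$, and then extracts the square root using that $W(F)$ and $\eta^{(m-1)(2m-1)}$ are both holomorphic on the simply-connected domain $\mathbb{H}$ with matching leading $q$-coefficients. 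Either way, the nonvanishing of $\eta$ on $\mathbb{H}$ and the exact order match at $\infty$ are what pin down the proportionality.
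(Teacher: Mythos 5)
Your proposal is correct and follows essentially the paper's own argument: the paper likewise forms the quotient $G := W^2(F)/\eta^{2(m-1)(2m-1)}$, notes that both $W^2(F)$ and $\eta^{2(m-1)(2m-1)}$ lie in $S_{(m-1)(2m-1)}(\Gamma,\delta^{(m-1)(2m-1)})$ with the same order $\tfrac{(m-1)(2m-1)}{12}$ at $\infty$, concludes $G \in M_0(\Gamma,1)=\mathbb{C}$ with $G \neq 0$, and then extracts the square root exactly as in your fallback route. Your primary variant (dividing $W(F)$ by $\eta^{(m-1)(2m-1)}$ directly) carries the half-integral multiplier-matching burden on $\widetilde{\Gamma}$ that you correctly flag as the subtlety; the squaring route --- the paper's proof and your stated alternative --- is precisely what avoids it.
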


\begin{proof}
We know that $\eta^{2(m-1)(2m-1)}\in S_{(m-1)(2m-1)}(\Gamma,\delta^{(m-1)(2m-1)})$ and  
\[ \eta^{2(m-1)(2m-1)}=q^{\frac{(m-1)(2m-1)}{12}}+\ldots \text{higher powers} \quad (\text{see \cite[Thm 3.7]{mason}) }.\]
Putting $G:=W^2(F) / \eta^{2(m-1)(2m-1)}$ we get $G\in M_{0}(\Gamma,1)=\mathbb{C}$. As $W(F)$ is non-zero, $G\in \mathbb{C}^{*}$. The proposition follows.
\end{proof}

From the description of differential operators $D_{\nu}$ in terms of Taylor coefficients $\chi_{\nu}$ of $\phi$ we can easily infer that,
\[\phi \in \cap_{\nu=1}^{m-2} \ker D_{2\nu-1} \iff \chi_{2\nu-1}=0\text{ for }\nu=1, 2,...,m-2.\] Thus if $\phi$ satisfy the above condition and $(\frac{\omega_1}{\det\mathcal{W}},\frac{\omega_2}{\det\mathcal{W}},...,\frac{\omega_{m-1}}{\det\mathcal{W}})$ being the last column of the matrix $\mathcal{W}^{-1}$, we get using \eqref{mat}
\begin{equation}\label{omega}
h_{\mu}=c_3\cdot\frac{\omega_\mu\cdot\chi_{2m-3}}{\eta^{(m-1)(2m-1)}},
\end{equation} for $\mu=1,2,...,m-1$ and some non-zero constant $c_3$.

\begin{prop} \label{om}
The tuple $\Omega_{m-1}:=(\omega_1,\omega_2,...,\omega_{m-1}) \in \widetilde{M}(\frac{(m-2)(2m-3)}{2},\alpha_{m})$ for some representation $\alpha_{m}$.
\end{prop}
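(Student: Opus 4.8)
The plan is to realize $\Omega_{m-1}$ as a single line of the adjugate of a matrix whose automorphy is explicit, and then to propagate that automorphy through the adjugate.

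First I would pin down what $\omega_\mu$ is. Since $(\omega_1/\det\mc W,\dots,\omega_{m-1}/\det\mc W)$ is the last column of $\mc W^{-1}=(\det\mc W)^{-1}\,\mrm{adj}(\mc W)$, the function $\omega_\mu$ equals $(-1)^{(m-1)+\mu}$ times the minor of $\mc W$ obtained by deleting the last row and the $\mu$-th column; that is, $\omega_\mu$ is (up to sign) the ordinary Wronskian of the $m-2$ functions $\{\theta^{*}_{m,\nu}\}_{\nu\neq\mu}$, built from derivatives of order $0,\dots,m-3$. The very column operations used above to pass from $\det\mc W$ to $W(F)$ apply verbatim to each of these $(m-2)\times(m-2)$ minors, and uniformly in $\mu$ (they only rearrange derivative orders $0,\dots,m-3$). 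Hence, up to a single nonzero constant, $\Omega_{m-1}$ is the last row of $\mrm{adj}(M)$, where
\[ M:=(F,\mathbb{D}F,\mathbb{D}^{2}F,\dots,\mathbb{D}^{m-2}F), \qquad \det M=W(F); \]
indeed $\omega_\mu$ becomes the signed minor of $M$ deleting its $\mu$-th row and its last (highest–derivative) column, i.e. the entry $\mrm{adj}(M)_{m-1,\mu}$. Passing to $M$ is the whole point: the columns of $M$ transform cleanly, whereas plain $\tau$-derivatives of modular forms do not.

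Next I would compute the automorphy of $M$. By \lemref{dj} the $j$-th column $\mathbb{D}^{j-1}F$ lies in $\widetilde{M}(\kappa_j,\rho_m)$ with $\kappa_j=\tfrac32+2(j-1)$, so with $w(\tau)$ the theta automorphy factor ($w(\tau)^2=c\tau+d$) one has $\mathbb{D}^{j-1}F(\g\tau)=w(\tau)^{2\kappa_j}\rho_m(\g)\,\mathbb{D}^{j-1}F(\tau)$, whence
\[ M(\g\tau)=\rho_m(\g)\,M(\tau)\,\Phi,\qquad \Phi=\mrm{diag}\big(w(\tau)^{2\kappa_1},\dots,w(\tau)^{2\kappa_{m-1}}\big). \]
Applying $\mrm{adj}$ and using $\mrm{adj}(ABC)=\mrm{adj}(C)\,\mrm{adj}(B)\,\mrm{adj}(A)$ with $\mrm{adj}(\Phi)=(\det\Phi)\Phi^{-1}$ diagonal, the left diagonal factor only rescales the last row while $\mrm{adj}(\rho_m(\g))$ acts on the right as a fixed matrix; reading off the last row and transposing gives
\[ \Omega_{m-1}(\g\tau)=w(\tau)^{\,2(\sum_i\kappa_i-\kappa_{m-1})}\,\mrm{adj}(\rho_m(\g))^{t}\,\Omega_{m-1}(\tau). \]
A direct computation yields $\sum_i\kappa_i-\kappa_{m-1}=\tfrac{(m-2)(2m-3)}{2}$, so the scalar is $w(\tau)^{(m-2)(2m-3)}=(c\tau+d)^{(m-2)(2m-3)/2}$, exactly the $\mid_{\kappa}$ automorphy factor for $\kappa=\tfrac{(m-2)(2m-3)}{2}$.

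Finally I would package this. Setting $\alpha_m(\g):=\mrm{adj}(\rho_m(\g))^{t}=\det\rho_m(\g)\cdot\big(\rho_m(\g)^{t}\big)^{-1}$, the identity $\mrm{adj}(AB)^{t}=\mrm{adj}(A)^{t}\mrm{adj}(B)^{t}$ shows $\alpha_m$ is a representation of $\widetilde{\Gamma}$, and the last display reads precisely $\Omega_{m-1}\mid_{\kappa}\g=\alpha_m(\g)\,\Omega_{m-1}$ with $\kappa=\tfrac{(m-2)(2m-3)}{2}$. Boundedness as $\Im\tau\to\infty$ is clear, each $\omega_\mu$ being a determinant of holomorphic forms whose $q$-expansions involve only positive powers of $q$. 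I expect the only real difficulty to be bookkeeping: checking that the passage from the ordinary-derivative matrix $\mc W$ to the modular matrix $M$ scales the entire adjugate line $\Omega_{m-1}$ by one and the same constant (this rests on the column operations being unipotent and independent of $\mu$), and keeping the order of the factors in $\mrm{adj}(ABC)$ straight, so that the diagonal $\Phi$ supplies only the scalar weight factor while $\rho_m$ supplies the matrix $\alpha_m$.
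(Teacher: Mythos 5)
Your proposal is correct and is essentially the argument the paper has in mind: the paper omits its proof, deferring to \cite[Prop~3.2]{das}, whose even-weight argument is exactly this cofactor computation — realizing $\Omega_{m-1}$ as the last row of $\mrm{adj}(F,\mathbb{D}F,\ldots,\mathbb{D}^{m-2}F)$, transforming under $\alpha_m(\g)=\det\rho_m(\g)\cdot(\rho_m(\g)^{t})^{-1}$ with scalar factor $w(\tau)^{2(\sum_j\kappa_j-\kappa_{m-1})}$, and your weight count $\sum_j\kappa_j-\kappa_{m-1}=\tfrac{(m-2)(2m-3)}{2}$ agrees with the weight the paper reads off from \eqref{omega}. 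Your handling of the one delicate point — that the triangular change of basis from $\mathbb{D}$-iterates to ordinary derivatives has constant determinant on the relevant block and acts uniformly in $\mu$ (deleting a row of the left factor does not affect the right-hand column change of basis) — is also accurate.
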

The proof of this proposition is rather similar to the proof of \cite[Prop~3.2]{das} with some appropriate and obvious changes, and is omitted. The weight of $\Omega_{m-1}$ can be calculated from \eqref{omega}.

\section{Proof of \thmref{mainthm}}
As in \cite{das}, the following theorem will be useful to prove \thmref{mainthm} in certain cases.
\begin{thm} \label{vanish}
The space  $S_k(N)^\eta := \{f \in S_k(N) \colon f \ \text{divisible by } \ \eta^{2k-2}   \}$ is zero for  
	
	$(i)$~$N=1$ if $12 \nmid k$, or for
	
	$(ii)$~all square-free $N$, provided  $k \equiv 4,10  ~\bmod 12$. \label{backbone}
\end{thm}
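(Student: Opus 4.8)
The plan is to reduce everything to a single weight‑one form. Suppose $f\in S_k(N)$ is divisible by $\eta^{2k-2}$ and put $g:=f/\eta^{2k-2}$. Since $\eta^{2k-2}$ is a modular form of weight $k-1$ on $\s$ with multiplier $\epsilon^{2k-2}$ (here $\epsilon$ denotes the $\eta$‑multiplier) that is nowhere zero on $\mbb H$, the quotient $g$ is a holomorphic modular form of weight $1$ on $\Gamma_0(N)$, holomorphic at every cusp (this last is the content of the divisibility), with multiplier $\epsilon^{-(2k-2)}$. As $\epsilon^{2}=\delta$ on $\s$, this multiplier is the restriction of the character $\delta^{1-k}$ of $\s$. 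Hence it suffices to show $g=0$, and the problem becomes one about weight‑one forms, for which the valence formula is sharp at level one.

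First I would treat (i), where $N=1$. We may assume $k$ is even, since $S_k(\s)=0$ otherwise. Now $g\in M_1(\s,\delta^{1-k})$, and the valence formula on $\s$ gives
\[ \mrm{ord}_\infty g+\tfrac12\,\mrm{ord}_i g+\tfrac13\,\mrm{ord}_\varrho g+\sum_{P}\mrm{ord}_P g=\tfrac1{12},\qquad \varrho=e^{2\pi i/3}, \]
the last sum running over the remaining $\s$‑inequivalent points of $\mbb H$. Every summand is non‑negative, and each of $\tfrac12\mrm{ord}_i g$, $\tfrac13\mrm{ord}_\varrho g$ and $\mrm{ord}_P g$ is either $0$ or strictly larger than $\tfrac1{12}$; so they all vanish and $\mrm{ord}_\infty g=\tfrac1{12}$. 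But the multiplier forces $\mrm{ord}_\infty g\equiv\tfrac{1-k}{12}\pmod 1$, whence $\tfrac1{12}\equiv\tfrac{1-k}{12}\pmod 1$, i.e. $12\mid k$. Thus $12\nmid k$ forces $g=0$, which proves (i).

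For (ii) the key arithmetic input is that $k\equiv4,10\pmod{12}$ is exactly $\gcd(k-1,12)=3$; write $k-1=3u$ with $u$ odd. Then the multiplier of $g$ has exact order $4$ and $g^{12}=f^{12}/\Delta^{\,k-1}$ is a genuine element of $M_{12}(\Gamma_0(N))$. I would exploit that $g^{12}$ is at once a holomorphic form and a perfect twelfth power. At a cusp $s$ of width $h_s$ one has $\mrm{ord}_s\eta^{2k-2}=(k-1)h_s/12$, so
\[ \mrm{ord}_s(g^{12})=12\,\mrm{ord}_s f-(k-1)h_s\equiv-3u\,h_s\pmod{12}. \]
Since $N$ is square‑free, each width $h_s$ is square‑free, so $4\nmid h_s$; as $u$ is odd this gives $u h_s\not\equiv0\pmod 4$ and hence $-3uh_s\not\equiv0\pmod{12}$. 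Therefore $\mrm{ord}_s(g^{12})\geq3$ at \emph{every} cusp, so $g$ vanishes at all cusps and $g^{12}\in S_{12}(\Gamma_0(N))$.

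The hard part will be turning this into a contradiction. Unlike the case $N=1$, the valence formula on $X_0(N)$ is not by itself decisive: the forced cusp vanishing $\sum_s\mrm{ord}_s(g^{12})\ge 3\cdot\#\{\text{cusps}\}$ need not exceed the total degree $\mu=[\s:\Gamma_0(N)]$. My plan is to feed the cusp congruences above into the stronger requirement that $\mathrm{div}(g)=\tfrac1{12}\,\mathrm{div}(g^{12})$ be an \emph{effective} divisor all of whose interior multiplicities are integral — that is, that $g^{12}$ admit a holomorphic twelfth root transforming under the order‑four multiplier — and to run the resulting divisor count on $X_0(N)$, where the square‑freeness of all cusp widths and the precise residue of $k$ modulo $12$ conspire to exclude an effective solution. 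Controlling all cusps of $\Gamma_0(N)$ at once is the central difficulty, and is exactly the point with no level‑one analogue where the even‑weight argument of \cite{das} must be reworked for the half‑integral $\eta$‑powers occurring here.
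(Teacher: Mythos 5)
Your part (i) is correct, and it is exactly what the paper intends: the paper gives no proof beyond the remark that (i) ``follows from the valence formula,'' and your execution fills this in properly --- since $\eta$ is non-vanishing on $\mbb H$, all interior orders of $g=f/\eta^{2k-2}$ are non-negative integers, so each elliptic term $\tfrac12\mrm{ord}_i g$, $\tfrac13\mrm{ord}_\varrho g$ is $0$ or exceeds $\tfrac1{12}$, forcing $\mrm{ord}_\infty g=\tfrac1{12}$, which is incompatible with $\mrm{ord}_\infty g\equiv\tfrac{1-k}{12}\pmod 1$ unless $12\mid k$.

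Part (ii), however, contains a genuine gap, which you flag yourself: the argument stops at a plan. Your local computations are fine (for square-free $N$ all widths $h_s$ divide $N$, so $4\nmid h_s$, and $\mrm{ord}_s(g^{12})\equiv -3uh_s\pmod{12}$ gives $\mrm{ord}_s g\geq \tfrac14$ at every cusp), but the proposed continuation --- a divisor count on $X_0(N)$ exploiting effectivity and integrality --- cannot succeed. The total degree of $\mathrm{div}(g)$ is $\mu/12$ with $\mu=\prod_{p\mid N}(p+1)$, while the forced cusp contribution is only $\tfrac14\cdot 2^{\omega(N)}$ ($\omega(N)$ the number of primes dividing $N$); already for $N=p\geq 5$ prime one has $\tfrac12\leq\tfrac{p+1}{12}$, and the interior part of $\mathrm{div}(g)$ is automatically an effective integral divisor (your ``holomorphic twelfth root transforming under the order-four multiplier'' is not an extra constraint: $g$ itself is such a root, by the very hypothesis that $f$ is divisible by $\eta^{2k-2}$ in the ring of holomorphic forms with multiplier). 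So the interior zeros can absorb the remaining degree without any obstruction, and no contradiction can come from this kind of counting; some genuinely different input is needed. Note that the paper does not prove (ii) either: it quotes the result from Arakawa--B\"ocherer \cite{ab2}, where the vanishing for square-free $N$ and $k\equiv 4,10\bmod{12}$ is established by a substantially more involved argument (exploiting the behaviour of such quotients at all cusps together with Atkin--Lehner/Fricke-type considerations, not merely the valence formula). In summary: (i) is complete and matches the paper's route; (ii) is incomplete, and the sketched completion would fail as stated.
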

Here `divisible' means divisible in the ring of holomorphic modular forms (with multiplier). Part $(i)$ is not stated in \cite{ab2}, but follows from the valence formula. 

For the proof of \thmref{mainthm}~$(ii)$ and~$(iii)$, we need information about the orders at $\infty$ of the functions $\omega_\nu$ in \eqref{omega}. The next lemma is devoted to that.

\begin{lem}\label{ord}
$\mrm{ord}_\infty\omega_\nu=\sum_{\mu=1}^{m-1}\frac{\mu^2}{4m}-\frac{\nu^2}{4m}$ \qq \qq $(\nu=1,2,...,m-1)$.
\end{lem}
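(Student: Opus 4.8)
The plan is to compute $\mathrm{ord}_\infty \omega_\nu$ by recalling exactly what $\omega_\nu$ is and then reducing everything to the order of the relevant theta-derivative minors. By Cramer's rule applied to the system \eqref{mat}, the $\nu$-th entry of the last column of $\mathcal{W}^{-1}$ is $\omega_\nu / \det\mathcal{W}$, where $\omega_\nu$ is (up to sign) the $(m-1,\nu)$ cofactor of $\mathcal{W}$, i.e. the determinant of the $(m-2)\times(m-2)$ minor $\mathcal{W}_\nu$ obtained by deleting the last row and the $\nu$-th column of $\mathcal{W}$. So the first step is simply to identify $\omega_\nu = \pm\det\mathcal{W}_\nu$ and reduce the problem to computing $\mathrm{ord}_\infty \det\mathcal{W}_\nu$.

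Next I would apply the same valuation principle already used in the proof of the lemma computing $\mathrm{ord}_\infty W^2(F)$: the order at $\infty$ of the determinant of a matrix of $q$-expansions equals, generically, the sum of the orders of a diagonal selection, and here the leading behaviour is controlled by the fact that the $\mu$-th column of $\mathcal{W}$ (corresponding to $\theta^*_{m,\mu}$ and its $\tau$-derivatives) has every entry of order $\frac{\mu^2}{4m}$ at $\infty$. The minor $\mathcal{W}_\nu$ retains exactly the columns indexed by $\{1,\dots,m-1\}\setminus\{\nu\}$, and the leading coefficient is again a nonzero Vandermonde-type determinant in the quantities $\frac{\mu^2}{4m}$ (now omitting $\mu=\nu$), so it does not vanish. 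Hence $\mathrm{ord}_\infty \det\mathcal{W}_\nu = \sum_{\mu=1,\,\mu\neq\nu}^{m-1}\frac{\mu^2}{4m} = \sum_{\mu=1}^{m-1}\frac{\mu^2}{4m} - \frac{\nu^2}{4m}$, which is precisely the claimed formula.

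The one point requiring genuine care, and which I expect to be the main obstacle, is the nonvanishing of the leading coefficient of $\det\mathcal{W}_\nu$. One must verify that the matrix of lowest-order coefficients of the entries of $\mathcal{W}_\nu$ is itself nonsingular, so that the orders do not accidentally add up to something larger because of cancellation. Concretely, the leading coefficient of $\theta^{*(j-1)}_{m,\mu}$ at its lowest-order term is a nonzero constant times $(\frac{\mu^2}{4m})^{j-1}$ (the differentiation pulls down powers of the exponent), so the leading-coefficient matrix of $\mathcal{W}_\nu$ factors as a nonzero scalar times a genuine Vandermonde matrix in the distinct values $\{\frac{\mu^2}{4m} : 1\le\mu\le m-1,\ \mu\neq\nu\}$. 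Since these $m-2$ values are pairwise distinct, the Vandermonde determinant is nonzero, and the leading term survives. This mirrors the argument already given for the full Wronskian $\mathcal{W}$, and I would simply transport that reasoning to the minor.

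Putting these together gives $\mathrm{ord}_\infty \omega_\nu = \mathrm{ord}_\infty \det\mathcal{W}_\nu = \sum_{\mu=1}^{m-1}\frac{\mu^2}{4m} - \frac{\nu^2}{4m}$, as asserted. I would keep the write-up brief since the valuation computation is entirely parallel to the one already carried out for $W^2(F)$, merely specialised to the $(m-2)\times(m-2)$ minor obtained by striking the last row and $\nu$-th column.
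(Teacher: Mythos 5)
Your proposal is correct and follows essentially the same route as the paper: identify $\omega_\nu$ as the determinant of the minor of $\mathcal{W}$ obtained by deleting the last row and $\nu$-th column, note that every entry of the $\mu$-th column has order $\frac{\mu^2}{4m}$ at infinity, and check that the matrix of leading coefficients has nonzero determinant, namely a nonzero multiple of the Vandermonde determinant $V\bigl(\frac{1^2}{4m},\ldots,\widehat{\frac{\nu^2}{4m}},\ldots,\frac{(m-1)^2}{4m}\bigr)$ in the pairwise distinct values with $\mu=\nu$ omitted. Your explicit justification of the nonvanishing (row and column rescalings reducing the leading-coefficient matrix to a genuine Vandermonde matrix) is exactly the computation the paper compresses into its leading coefficient $c_4\cdot\frac{(m-1)!}{\nu}\cdot V(\cdots)$.
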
 

\begin{proof}
	$\omega_\nu$ is the determinant of $(m-1),\nu$-th minor of $\mathcal{W}$. Looking at the first non-zero co-efficients of entries of $\mathcal{W}$ we see that the leading non-zero coefficient of $\omega_\nu$ equals $c_{4}\cdot\frac{(m-1)!}{\nu}\cdot V(\frac{1^2}{4m},..,\widehat{\frac{\nu^2}{4m}},...,\frac{(m-1)^2}{4m})$ for some $c_4\in \mathbb{C}^*$. Now each element in the $\mu$-th column
	of the $\mathcal{W}$ has $\mrm{ord}_{\infty}$ equal to $\frac{\mu^2}{4m}$.	Thus the lemma follows.
\end{proof}	

\noindent\textbf{Proof of \thmref{mainthm}, cont'd.} We now go back to \eqref{omega}, where we multiply both sides by a non-zero $ H \in M_s(\Gamma_0(N)) \backslash \{0\}$, where $s \geq 0$ is even to be specified later. This gives
\begin{equation}\label{4.1}
\frac{\chi_{2m-3}\cdot H}{\eta^\beta}=\frac{h_{\mu}\cdot H}{\omega_{\mu}\cdot \eta^{\beta-\lambda}}.
\end{equation}
where we have put 
\begin{align} \label{betadef}
\beta=2(k+2m+s-4), \qq
 \lambda=(m-1)(2m-1).\end{align}
Our aim is to prove $\chi_{2m-3}=0$. To this end, let us define a new function $\psi$ by
\[\psi=\frac{h_{\mu}\cdot H}{\omega_{\mu} \cdot \eta^{\beta-\lambda}}.\]
From \eqref{4.1} it is clear that $\psi$ does not depend on $\mu$ and since from \lemref{4.1} we get $\omega_\mu$ is never zero, $\psi$ is well defined. It is clear from \eqref{4.1} that if $\psi$ is bounded at all cusps of $\Gamma_{0}(N)$, then $\psi \in M_1(N,\vartheta)$ for some multiplier $\vartheta$. \textit{We want to find sufficient conditions on $m,k$ which would imply that $\psi=0$, i.e., $\chi_{2m-3}=0$.} With that aim, it is helpful here to introduce another function for $r\in \mathbb{Z}$, defined by
\begin{equation} \label{4.2}
\psi_{r}=\eta^{-r} \cdot \psi =\frac{h_{\mu}\cdot H}{\omega_{\mu} \cdot\eta^{\beta-\lambda+r}}.
\end{equation}
From the previous discussion and \eqref{4.2} we see that $\psi_{r}\in M_{1-\frac{r}{2}}(N,\varepsilon_r)$ for some multiplier $\varepsilon_r$ if we can show that it is bounded at all cusps. In other words it is enough to show that $\psi_{r}\mid_{1-\frac{r}{2}}^{\varepsilon_r}\gamma$ is bounded for any $\gamma\in \Gamma$ and where we recall that
\[\psi_{r}\mid_{1-\frac{r}{2}}^{\varepsilon_r}\gamma=\varepsilon_r(\gamma)^{-1} j(\gamma,\tau)^{-(1-\frac{r}{2})} \psi_{r}(\gamma\tau).\]
Let us recall from \propref{om} that  $\Omega_{m-1}:=(\omega_1,\omega_2,...,\omega_{m-1}) $ belongs to $\widetilde{M}(\frac{(m-2)(2m-3)}{2},\alpha_{m})$. We fix $\widetilde{\gamma_0}=(\gamma_{0}, \omega_{0}(\tau)) \in \widetilde{\Gamma} $ and we calculate:
\begin{align}
& \varepsilon_r(\gamma_0)^{-1} w_0(\tau)^{-(2-r)} \psi_{r}( \gamma_0 \tau) \cdot    \alpha_m(\g_0) \left( \Omega_{m-1}(\tau) / \eta^{\lambda-\beta-r}(\tau) \right)   \label{00} \\
&= \varepsilon'(\gamma_0) j(\gamma_0,\tau)^{-k -s + 1/2} \psi_{r}( \gamma_0 \tau) (\Omega_{m-1} / \eta^{\lambda-\beta-r})(\gamma_0 \tau) \nonumber \\
&= \varepsilon''(\gamma_0) j(\gamma_0,\tau)^{-k- s + 1/2} (h_1 H,h_2 H,\ldots, h_{m-1} H)(\gamma_0 \tau) \nonumber \qq (\text{using} \, \eqref{4.2}) \\
&= \varepsilon'''(\gamma_0) \left[ (h_1H,h_2H,\ldots, h_{m-1}H)\mid^{\varepsilon''}_{k +s - 1/2} \gamma_0 \right].\label{bdd}
\end{align}
In the above, $\varepsilon'(\gamma_0)$, $\varepsilon''(\gamma_0)$, $\varepsilon'''(\gamma_0)$ are certain complex numbers which arise from the appropriate multiplier systems are roots of unity. Since $h_\mu,H$ are modular forms, \eqref{bdd} remains bounded as $ \Im(\tau) \to \infty$.

In order to prove that $\psi_{r}\mid_{1-\frac{r}{2}}^{\varepsilon_r}\gamma$ is bounded, it is sufficient that as $\Im(\tau)\to \infty$, atleast one of the components of $\Omega_{m-1}(\tau)/\eta^{\lambda-\beta-r}$ is non-zero. In other words we need some $\nu\in \{1,2,...,m-1\}$ such that $\mrm{ord}_{\infty}\eta^{\lambda-\beta-r}\geq \mrm{ord}_{\infty}\omega_{\nu}$. Now concernig the possibility of such a $\nu$, putting $\nu=m-l$ leads us to
  \[2(m-k) \geq 12l- \frac{6l^2}{m}+2s+r-8.\] 
From the above, $m-k$ will be minimal, i.e., we get the largest range of values of $m-k$ when $l=1$ i.e. $\nu=m-1$. If we further consider the possibility of  $ \mrm{ord}_{\infty}\eta^{\lambda-\beta-r}= \mrm{ord}_{\infty}\omega_{m-1}$, it would imply $\lambda-\beta-r=\frac{(m-2)(m-1)(2m-3)}{m}$, which is impossible for $m>3$ since the right hand side is not an integer. Thus we are led to consider
\begin{align} \label{cond}
\mrm{ord}_{\infty}\omega_{m-2}> \mrm{ord}_{\infty}\eta^{\lambda-\beta-r} > \mrm{ord}_{\infty}\omega_{m-1},
\end{align}
which along with \lemref{ord} implies
\begin{align} \label{infty}
\Omega_{m-1}(\tau)/\eta^{\lambda-\beta-r} \to (0,0,...,0,\infty) \q \text{as} \q \Im(\tau) \to \infty.
\end{align}
We also note that for $m>3$, \eqref{cond} is equivalent to the condition
\[(m-2)+\frac{3}{m}>k+s+\frac{r}{2}>(m-8)+\frac{12}{m},\]
i.e.
\begin{align} \label{eqinq}
 s+\frac{r}{2}+8-\frac{12}{m} > m-k > 2+s+\frac{r}{2}-\frac{3}{m}.
\end{align}
In our application, $r$ would be even. So assuming this condition, the second inequality in \eqref{eqinq} above implies that the largest range of $m-k$ happens, and the first inequality is also satisfied, if we choose
\begin{align} \label{mks}
m-k=2+s+\frac{r}{2} \quad \text{for} \q m>3.
\end{align}
Henceforth we assume \eqref{mks}. Now \eqref{00}, \eqref{bdd}, \eqref{cond} and \eqref{infty}  imply that $\psi_{r}\mid_{1-\frac{r}{2}}^{\varepsilon_{r}} \gamma$ vanishes as $\Im(\tau)\to\infty$, since the last column of $\alpha_m(\widetilde{\gamma_0})$ is not identically zero because $\alpha_m$ is a homomorphism from $\widetilde{\Gamma}$ to $GL_{m-1}(\mathbb{C})$. The upshot of the foregoing discussion is that 
\begin{align} \label{bang}
\psi_{r}\in M_{1-\frac{r}{2}}(N,\varepsilon_r) \q \text{if} \q 2+s+\frac{r}{2}=m-k.
\end{align}

\noindent$\bullet$\, \textbf{Part (i) of \thmref{mainthm}}: In this case $N\geq1$ is arbitrary, and we choose $r>2$ even and $s=0$. This means $1- r/2<0$ and so $\psi_{r}\in M_{1-\frac{r}{2}}(N,\varepsilon_r) = \{0\}$ (see e.g. \cite[Theorem~4.2.1]{rankin}) when $m-k\geq4$. Thus under this condition $\chi_{2m-3}=0$.

\noindent$\bullet$\, \textbf{Part (ii) of \thmref{mainthm}}: In this case $N$ is square-free. In \eqref{mks} we choose $r=0$, so that $s=m-k-2\geq 0$. To have a non-zero $H$ in \eqref{4.2} we need to have $s\in 2\mathbb{Z}$ and hence $m$ to be odd. By \eqref{betadef} and \eqref{bang} we know that $\eta^\beta$ divides $\chi_{2m-3} \cdot H$ with $\beta$ suited for an application of part~$(i)$ of \thmref{vanish} provided
\[k+2m-3+s\equiv 4, 10\;\pmod {12} \quad
\text{ i.e., }\quad k+2m+s\equiv 1\; \pmod{6}.\]
From the choice of $s=m-k-2\geq 0$ we get, \[k+2m+s=3m-2\equiv1\;\pmod{6}.\] Hence when $N$ is square-free, $m$ is odd and $m-k\geq 2$, one has $\chi_{2m-3}=0$.

\noindent$\bullet$\, \textbf{Part (iii) of \thmref{mainthm}}: Here $N=1$. We argue exactly as in the previous case. We set $r=0$, so that $s=m-k-2\geq 0$. As before we can apply part~$(ii)$ of \thmref{vanish} to conclude $\chi_{2m-3} \cdot H=0$ provided
\[2m+k+s-3\not\equiv 0\pmod{12} \quad
\text{i.e., } \quad 2m+k+s\not\equiv 3\pmod{12}.\]
Putting the value of $s$, we see that this condition is always satisfied. Therefore when $N=1$, $m$ is  odd and $m-k\geq2$, one has $\chi_{2m-3}=0$.

\end{document}